\documentclass[a4paper, 11pt]{amsart}
\usepackage{amsmath,amssymb}
\usepackage{caption}
\usepackage{here}
\usepackage{color}
\usepackage{cite}
\usepackage{booktabs} 
\usepackage{graphicx}
\newtheorem{thm}{Theorem}[section]
\newtheorem{lem}[thm]{Lemma}
\newtheorem{prp}[thm]{Proposition}

\theoremstyle{definition}
\newtheorem{problem}[thm]{Problem}

\newcommand{\al}{\alpha}
\newcommand{\be}{\beta}

\newcommand{\ds}{\displaystyle}

\newcommand{\ch}[1]{\textcolor{black}{#1}}

\title[Reciprocal sum of  Fibonacci numbers
]
{
Continuous approximation to the reciprocal sum of the cubes of Fibonacci numbers
}

\author[Wontae Hwang, Jong-Do Park, Kyunghwan Song]
{Wontae Hwang$^1$, Jong-Do Park$^2$, Kyunghwan Song$^3$}

\subjclass[2000]{Primary 11B39, 11B37, 11B50, \ch{Secondary} 40A05, 41A60}

\address{Wontae Hwang, 
\ch{Department of Mathematics, and Institute of Pure and Applied Mathematics, Jeonbuk National University, Baekje-daero, Deokjin-gu, Jeonju-si, Jeollabuk-do, 54896 Republic of Korea}}

\email{hwangwon@jbnu.ac.kr}

\address{Jong-Do Park, Department of Mathematics and Research Institute for Basic Sciences, Kyung Hee University, Seoul 130-701, Korea}

\email{mathjdpark@khu.ac.kr}

\address{Kyunghwan Song, 
\ch{Department of Mathematics, Jeju National University, 102 Jejudaehak-ro Jeju, 63243, Republic of Korea}}

\email{\ch{khsong@jejunu.ac.kr}}

\keywords{Fibonacci numbers, continuous approximation, floor function, Cassini's identity, Binet's formula}

\begin{document}


\begin{abstract}
{Let $f_n$ be the $n$-th Fibonacci number with $f_1=f_2=1$.
Recently exact formulas for the integer parts of the tails of inverse reciprocal Fibonacci numbers 
have been obtained in several cases.
However, the cubic case ($s=3$) is much more complicated because of highly oscillating error terms. 
Thus it is difficult to construct a precise continuous approximation and algebraic estimates simultaneously. 
In this paper, we give a complete and unified algebraic method to solve this difficulty. 
More precisely we construct an explicit closed form of sequence $g_n$, preserving the principal part 
$f_n^3-f_{n-1}^3$, such that 
$\ds
\lim_{n\rightarrow\infty}\left\{
\left(
\sum^\infty_{k=n}\frac{1}{f_k^3}
\right)^{-1}-g_n
\right\}=0.
$
By decomposing the error terms and investigating the algebraic identities, we establish the lower and upper bounds
$
\ds g_n<\left(
\sum^\infty_{k=n}\frac{1}{f_k^3}
\right)^{-1}<g_n+2/f_n
$
for sufficiently large $n$. 
As an application of the explicit form of the sequence $g_n$ and these estimates, we completely determine the exact value of the floor function for $s=3$.
}
\end{abstract}

\allowdisplaybreaks

\maketitle

\section{\bf Introduction}

Let $f_n$ be the $n$-th Fibonacci number defined by the recurrence relation
$$
f_0=0,~ f_1=1,\mbox{ and }f_{n+1}=f_n+f_{n-1}\mbox{ for any }n\in\mathbb{N}.
$$

Recently, the research on the tailing of the convergent series has attracted significant interest.
In particular, we focus our attention on the reciprocal sum of Fibonacci numbers and related sequences \cite{CC18, LP, LP2, LW, WZ15, Zhang11, ZW12}. 
By Binet's formula \cite{Ko}, $f_n$ can be written as the explicit form $f_n=\frac{\al^n-\be^n}{\al-\be}$ for any integers $n$, where $\al=\frac{1+\sqrt{5}}{2}$ and $\be=\frac{1-\sqrt{5}}{2}$ 
are solutions of the characteristic equation $x^2=x+1$.
Since $f_n$ is asymptotically comparable to $\al^n$ for sufficiently large $n$, the infinite series
$$
F(s)=\sum^\infty_{k=1}\frac{1}{f_k^s}
$$
converges for all $s\geq1$.
By the Cauchy criterion, the tail of the convergent series $F(s)$ tends to zero, so that
$$
\lim_{n\rightarrow\infty}\left(
\sum^\infty_{k=n}\frac{1}{f_k^s}
\right)^{-1}=\infty.
$$

Motivated by this observation,  Ohtsuka and Nakamura \cite{ON} initiated the study of the integer parts of these inverse tails. 
They determined the exact values of the floor function 
for the cases $s=1$ and $s=2$. For instance, if $s=1$, then
\begin{align*}
\left\lfloor \left(\sum^\infty_{k=n}\frac{1}{f_k}\right)^{-1}\right\rfloor&=
\begin{cases}
f_{n-2}, & n\geq2 \mbox{ is even};\\
f_{n-2}-1, & n\geq1 \mbox{ is odd}.
\end{cases}
\end{align*}
To understand a deeper continuous behavior 
underlying these floor values, 
subsequent studies \cite{LP, LP2} introduced an 
asymptotic approach.
Instead of bounding the sum between integers, 
they have tried to find a precise continuous sequence 
$g_{s,n}$ that approaches the inverse reciprocal sum asymptotically. 
This relationship is formalized as follows:
\vskip1pc

\begin{problem}
Find a precise representation $g_{s,n}$ such that 
$$
\lim_{n\rightarrow\infty}
\left\{\left(
\sum^\infty_{k=n}\frac{1}{f_k^s}
\right)^{-1}-g_{s,n}
\right\}=0.
$$
\end{problem}

Solving this problem not only reveals the exact algebraic structure of the infinite sum 
but also gives the exact value of its floor function as an application. 
For the cases of $s=1,2$, it has already been proved in \cite{LP,LP2} that
\begin{align*}
&g_{1,n}=f_n-f_{n-1},\\
&g_{2,n}=f_n^2-f_{n-1}^2+\frac{2}{3}(-1)^n.
\end{align*}

Moreover, for $s=4$, we proved in \cite{HPS} that 
$$g_{4,n}=f_n^4-f_{n-1}^4+\frac{2(-1)^n}{5}f_{2n-1}+\frac{2\sqrt{5}}{75}.$$

The cubic case $(s=3)$ has historically been more complicated and was independently 
approached from two distinct viewpoints. Wang \cite{W12} determined the floor function for $s=3$ 
by proving the inequalities. 
However, their formula still contained the floor operation with modulo $11$ fractions. 
Furthermore, their algebraic form does not contain the principal term of the form $f_n^s-f_{n-1}^s$
and they did not provide the explicit continuous sequence $g_{3,n}$ whose error term approaches zero. 
On the other hand, a recent study in \cite{LYY} investigated the asymptotic behavior of the generalized Fibonacci numbers and derived an approximation formula that satisfies the limit definition for $s=3$. 
In fact, their method only used Binet's formula without constructing algebraic bounds, leaving 
the exact evaluation of the floor function unsolved.

The main goal of this paper is to bridge this gap by proving a complete and unified algebraic framework. 
We construct the exact continuous sequence $g_{3,n}$ containing the natural principal part $f_n^3-f_{n-1}^3$ and construct algebraic inequalities. 
By explaining the fractional remainders through modulo-5 method, we
solve both the continuous asymptotic limit and the discrete floor function value as a new form.

The main theorem of this paper is stated as follows:

\vskip1pc

{\bf{Theorem \ref{thm-main}.}}  
For sufficiently large $n$, we have
$$
g_{3,n}<\left(\sum^\infty_{k=n}\frac{1}{f_k^3}\right)^{-1}<g_{3,n}+\frac{2}{f_n},
$$
where
$$
g_{3,n}:=f_n^3-f_{n-1}^3+\frac{3(-1)^n}{11}(3f_n-f_{n-1}).
$$

\vskip1pc

As an application of Theorem \ref{thm-main}, 
we compute the exact value of its floor function.

\vskip1pc

{\bf{Theorem \ref{thm-floor}.}} 
The exact value of the floor function is given by
$$
\left\lfloor
\left(
\sum^\infty_{k=n}\frac{1}{f_k^3}
\right)^{-1}
\right\rfloor
=
\begin{cases}
g_n-w_n & n\not\equiv3\pmod{5},\\
g_n-w_n-1 & n\equiv3\pmod{5}.
\end{cases}
$$

For the sake of simplicity, we will write $g_n=g_{3,n}$ throughout the remainder of this paper.

\vskip1pc

This paper is organized as follows.
In Section 2, we briefly review important identities of the Fibonacci sequences. 
In Section 3, we investigate the oscillating error terms and 
construct the correction sequences $u_n$, $w_n$, and $v_n$ based on their modulo-5 periodic phenomenon. 
In Section 4, we present our main theorem 
and demonstrate how the explicit approximation $g_n=g_{3,n}$ gives the exact value of its floor function
(See Theorem \ref{thm-floor}). 
In Section 5, we provide the algebraic proofs for the lower and upper estimates (See Theorem \ref{thm-main}).
Finally in Section 6, we end with concluding remarks.

\vskip1pc

\section{\bf Basic identities}

In this section, we briefly review several fundamental properties and identities involving the Fibonacci 
and Lucas sequence \cite{Ko}.
These identities will be extensively used as standard 
algebraic tools \cite{Rab96, Yang18, Yayenie11} for evaluating finite sums and simplifying high-degree cross terms in our proofs. 

Recall that the Lucas number $L_n$ satisfies the same recurrence relation as the Fibonacci number, namely 
$L_{n+1}=L_n+L_{n-1}$, but with the different initial conditions $L_0=2$ and $L_1=1$. By Binet's formula, 
the $n$-th Fibonacci and Lucas numbers can be explicitly expressed as
\begin{align}\label{defn}
f_n=\frac{\al^n-\be^n}{\sqrt{5}} \mbox{ and }
L_n=\al^n+\be^n.
\end{align}
Using these closed forms, one can naturally extend 
$f_n$ and $L_n$ to negative indices, which will be used when we compute the error terms.

\begin{lem}\label{lem-negative}
For any integer $n$, we have
$$
f_{-n}=(-1)^{n+1}f_n\mbox{ and }
L_{-n}=(-1)^nL_n.
$$
\end{lem}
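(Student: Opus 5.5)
We prove Lemma \ref{lem-negative} directly from Binet's formula \eqref{defn}, which the paper takes as the definition of $f_n$ and $L_n$ for every integer $n$. The one algebraic fact needed is $\al\be=-1$: the product of the two roots of $x^2=x+1$ equals the constant term $-1$. It follows that $\al^{-1}=-\be$ and $\be^{-1}=-\al$, so for every integer $n$
$$
\al^{-n}=(-1)^n\be^n,\qquad \be^{-n}=(-1)^n\al^n .
$$

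Substituting these into \eqref{defn} with $n$ replaced by $-n$ gives, for the Fibonacci numbers,
$$
f_{-n}=\frac{\al^{-n}-\be^{-n}}{\sqrt5}=\frac{(-1)^n\be^n-(-1)^n\al^n}{\sqrt5}=(-1)^{n+1}\,\frac{\al^n-\be^n}{\sqrt5}=(-1)^{n+1}f_n .
$$
For the Lucas numbers it gives
$$
L_{-n}=\al^{-n}+\be^{-n}=(-1)^n(\be^n+\al^n)=(-1)^nL_n .
$$

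These computations hold for all integers $n$, positive or negative, so no case split is needed. There is no real obstacle. The only point to state explicitly is that the extension to negative indices is defined through \eqref{defn} itself. Consequently the backward recurrence $f_{n-1}=f_{n+1}-f_n$ is consistent with these values, but it is not needed for the proof.
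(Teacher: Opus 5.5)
Your proof is correct: $\al\be=-1$ gives $\al^{-n}=(-1)^n\be^n$ and $\be^{-n}=(-1)^n\al^n$, and substituting these into Binet's formula \eqref{defn} yields both identities for every integer $n$. The paper states this lemma without a written proof, remarking only that the closed forms \eqref{defn} extend $f_n$ and $L_n$ to negative indices, so your argument is the one the paper intends.
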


Using this property, we can obtain the values for negative indices such as $L_{-5}=-L_5=-11$, which will be used in the proof of Proposition \ref{prp-vn}.
The following lemma can be obtained using a general generating function for Fibonacci sequence. 

\begin{lem}\label{lem-gen}
For any integers $a,b$ and a real number $x$ such that $1-xL_a+x^2(-1)^a\neq0$, we have
\begin{align*}
\sum^k_{r=0}f_{ar+b}x^r=\frac{f_b-x(-1)^af_{b-a}-x^{k+1}f_{a(k+1)+b}+x^{k+2}(-1)^af_{ak+b}}{1-xL_a+x^2(-1)^a},
\end{align*}
where $L_a$ denotes the $a$-th Lucas number. 
\end{lem}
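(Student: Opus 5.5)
The natural route is to reduce Lemma \ref{lem-gen} to a finite geometric sum in the Binet representation (\ref{defn}). Writing $f_{ar+b}=(\al^{ar+b}-\be^{ar+b})/\sqrt5$, the left-hand side splits as
$$
\sum_{r=0}^k f_{ar+b}x^r=\frac{1}{\sqrt5}\left(\al^b\frac{1-(\al^a x)^{k+1}}{1-\al^a x}-\be^b\frac{1-(\be^a x)^{k+1}}{1-\be^a x}\right).
$$
This is valid whenever $\al^a x\neq1$ and $\be^a x\neq1$. Since $\al\be=-1$, the product is
$$(1-\al^a x)(1-\be^a x)=1-x(\al^a+\be^a)+x^2(\al\be)^a=1-xL_a+x^2(-1)^a.$$
So the hypothesis that this quantity is nonzero is exactly what guarantees that both geometric denominators are nonzero. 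This identification is the first step.

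Next I will put both fractions over the common denominator $1-xL_a+x^2(-1)^a$ and expand the numerator:
$$
\al^b(1-\al^{a(k+1)}x^{k+1})(1-\be^a x)-\be^b(1-\be^{a(k+1)}x^{k+1})(1-\al^a x).
$$
I will sort the terms by powers of $x$.

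\emph{Constant term.} This is $\al^b-\be^b$, which after dividing by $\sqrt5$ gives $f_b$.

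\emph{Linear term.} This is $-x(\al^b\be^a-\be^b\al^a)$. Using $\be^a=(-1)^a\al^{-a}$ and $\al^a=(-1)^a\be^{-a}$, it equals $-x(-1)^a(\al^{b-a}-\be^{b-a})$, which gives $-x(-1)^af_{b-a}$.

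\emph{Term in $x^{k+1}$.} This is $-x^{k+1}(\al^{a(k+1)+b}-\be^{a(k+1)+b})$, giving $-x^{k+1}f_{a(k+1)+b}$.

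\emph{Term in $x^{k+2}$.} This is $x^{k+2}(\al^{a(k+1)+b}\be^a-\be^{a(k+1)+b}\al^a)$. Applying the same substitution $\al^a\be^a=(-1)^a$, it becomes $x^{k+2}(-1)^a(\al^{ak+b}-\be^{ak+b})$, giving $x^{k+2}(-1)^af_{ak+b}$.

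Collecting these four pieces yields the claimed formula, and the argument is valid for all integers $a,b$ because Binet's formula defines $f_n$ for negative indices consistently, as in Lemma \ref{lem-negative}.

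The only delicate point is the degenerate case. If $\al^a x=1$ (or $\be^a x=1$), the geometric-sum formula fails. However, that case makes $1-xL_a+x^2(-1)^a=0$, which the hypothesis excludes. The case $a=0$ needs no special treatment: there $\al^a=\be^a=1$, $L_0=2$, and the denominator $(1-x)^2$ is nonzero by hypothesis, so the argument above goes through unchanged. Beyond this, the main obstacle is simply the sign bookkeeping in the cross terms, where the identity $\al\be=-1$ is used repeatedly.
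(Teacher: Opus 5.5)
Your proposal is correct and follows the paper's own proof. Both write $f_{ar+b}$ via Binet's formula, sum the two finite geometric series, combine over $(1-\alpha^a x)(1-\beta^a x)=1-xL_a+x^2(-1)^a$, and sort the numerator by powers of $x$ using $\alpha\beta=-1$. Your version is slightly more careful than the printed one: you keep the factor $(1-\beta^a x)$ on the $\alpha^b$ term, which a typo drops from the paper's display, and you state explicitly that the hypothesis forces both $1-\alpha^a x$ and $1-\beta^a x$ to be nonzero.
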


\begin{proof}
By Binet's formula (\ref{defn}), we have 
\begin{align*}
\sum^k_{r=0}f_{ar+b}x^r
&=\frac{1}{\sqrt{5}}\sum^k_{r=0}(\al^{ar+b}-\be^{ar+b})x^r\\
&=\frac{1}{\sqrt{5}}\left(
\al^b\sum^k_{r=0}(\al^ax)^r-\be^b\sum^k_{r=0}(\be^ax)^r
\right)\\
&=\frac{1}{\sqrt{5}}\left(
\al^b\frac{1-(\al^ax)^{k+1}}{1-\al^ax}-
\be^b\frac{1-(\be^ax)^{k+1}}{1-\be^ax}
\right)\\
&=\frac{1}{\sqrt{5}}
\frac{\al^b(1-\al^{a(k+1)}x^{k+1})-\be^b(1-\be^{a(k+1)}x^{k+1})(1-\al^ax)}{(1-\al^ax)(1-\be^ax)}.
\end{align*}
Using the identity $\al\be=-1$, the numerator is 
\begin{align*}
&(\al^b-\be^b)-x(\al^b\be^a-\be^b\al^a)-x^{k+1}(\al^{a(k+1)+b}-\be^{a(k+1)+b})+x^{k+2}(\al^{ak+a+b}\be^a-\be^{ak+a+b}\al^a)\\
=&(\al^b-\be^b)-x(-1)^a(\al^{b-a}-\be^{b-a})-x^{k+1}(\al^{a(k+1)+b}-\be^{a(k+1)+b})
+x^{k+2}(-1)^a(\al^{ak+b}-\be^{ak+b})
\end{align*}
and the denominator is 
\begin{align*}
(1-\al^ax)(1-\be^ax)=1-(\al^a+\be^a)x+(\al\be)^ax^2=1-xL_a+x^2(-1)^a,
\end{align*}
which completes the proof.
\end{proof}

In fact, we can prove Lemma \ref{lem-gen} also using the generating function 
\begin{align*}
\sum^\infty_{s=0}f_{as+b}x^s
=\frac{f_b-x(-1)^af_{b-a}}
{1-xL_a+x^2(-1)^a}.
\end{align*}
See the above formula in \cite{Ko}. Using the above generating formula, we have 
\begin{align*}
\sum^k_{r=0}f_{ar+b}x^r
&=\sum^\infty_{r=0}f_{ar+b}x^r-\sum^\infty_{r=k+1}f_{ar+b}x^r\\
&=\sum^\infty_{r=0}f_{ar+b}x^r-x^{k+1}\sum^\infty_{r=0}f_{ar+a(k+1)+b}x^r\\
&=\frac{f_b-x(-1)^af_{b-a}}{1-xL_a+x^2(-1)^a}-x^{k+1}
\left(
\frac{f_{a(k+1)+b}-x(-1)^af_{a(k+1)+b-a}}{1-xL_a+x^2(-1)^a}
\right).
\end{align*}
Note that the infinite series converges in the interval of convergence.
Through analytic continuation, the rational function is well-defined for any real $x$ 
provided that the denominator is non-zero.
\vskip1pc

The following lemma will be used when we reduce the higher order Fibonacci numbers into linear combinations of $f_n$ and $f_{n-1}$
(See \cite{Ko}).

\begin{lem}\label{lem-red}
For any integers $n,m$, the addition formula for Fibonacci numbers holds:
$$
f_{n+m}=f_{m+1}f_n+f_mf_{n-1}.
$$
\end{lem}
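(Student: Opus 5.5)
The addition formula $f_{n+m}=f_{m+1}f_n+f_mf_{n-1}$ is a standard identity, and I would prove it directly from Binet's formula (\ref{defn}), valid for all integers by Lemma \ref{lem-negative}. The key algebraic facts are $\al\be=-1$, $\al-\be=\sqrt5$, and the relations $\al^{m+1}=\al f_{m+1}+f_m$, $\be^{m+1}=\be f_{m+1}+f_m$. These give, more simply, $\al^{k}=\al f_k+f_{k-1}$ and $\be^k=\be f_k+f_{k-1}$ for every integer $k$.

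To obtain these relations, I would proceed as follows. For $k\ge 1$, induction using $\al^2=\al+1$ does the job. Alternatively, one can check directly that $\al f_k+f_{k-1}=\big(\al^{k+1}-\al\be^k+\al^{k-1}-\be^{k-1}\big)/\sqrt5$. Then use $-\al\be^k=\be^{k-1}$, which follows from $\al\be=-1$, and $\al^{k+1}+\al^{k-1}=\al^{k}(\al+\al^{-1})=\al^k(\al-\be)=\sqrt5\,\al^k$. This direct check works for every integer $k$ simultaneously, so no separate treatment of negative indices is needed.

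With these relations in hand, I would write
\[
\al^{n+m}=\al^n\al^m=\al^n(\al f_m+f_{m-1})=f_m\al^{n+1}+f_{m-1}\al^n,
\]
and the same expression holds with $\be$ in place of $\al$. Subtracting the two and dividing by $\sqrt5$ gives $f_{n+m}=f_mf_{n+1}+f_{m-1}f_n$. Substituting $f_{n+1}=f_n+f_{n-1}$ and $f_m+f_{m-1}=f_{m+1}$ then yields $f_{n+m}=f_{m+1}f_n+f_mf_{n-1}$.

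There is no real obstacle in this argument. The only point needing care is that negative indices are covered, which the Binet-based verification handles uniformly.
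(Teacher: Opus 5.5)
Your proof is correct. The relation $\al^k=\al f_k+f_{k-1}$ holds for every integer $k$, and so does its $\be$-analogue, which follows from the same computation using $\be+\be^{-1}=\be-\al=-\sqrt5$. Multiplying by $\al^n$, subtracting the $\be$-version and using $f_{m+1}=f_m+f_{m-1}$ gives the formula.

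The paper does not prove this lemma; it quotes it from Koshy's book as a standard identity. There it is usually obtained either by induction on $m$ or from $Q^{n+m}=Q^nQ^m$, where $Q=\begin{pmatrix}1&1\\1&0\end{pmatrix}$ and $Q^k=\begin{pmatrix}f_{k+1}&f_k\\ f_k&f_{k-1}\end{pmatrix}$. Your relation $\al^k=\al f_k+f_{k-1}$ is just the statement that $(\al,1)^{T}$ is an eigenvector of $Q^k$ with eigenvalue $\al^k$.

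What your Binet route buys is a self-contained argument that treats all signs of $n$ and $m$ at once, in the same spirit as the paper's Binet proof of Lemma \ref{lem-gen}. This matters: in the proof of Proposition \ref{prp-vn} the lemma is used with $m=-4$, giving $f_{n-4}=f_{-3}f_n+f_{-4}f_{n-1}=2f_n-3f_{n-1}$. A plain induction on $m\ge 1$ would need an extra step, such as backward induction or the symmetry of the formula in $n$ and $m$, to cover that use. The citation, by contrast, buys only brevity.

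One small remark on your first sentence. In the paper, Binet's formula is itself the definition of $f_n$ for negative $n$, and Lemma \ref{lem-negative} is a consequence of it. So you do not need Lemma \ref{lem-negative} to justify using Binet for all integers; your computation goes through unchanged.
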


Finally, we will use the following Cassini's identity 
when we compute the polynomial identities involving  Fibonacci numbers (See \cite{Ko}).

\begin{lem}\label{lem-Cassini}
For any integers $n$, Cassini's identity holds:
$$
f_{n-1}f_{n+1}=f_n^2+(-1)^n.
$$
\end{lem}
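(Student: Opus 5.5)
This is the classical Cassini identity, and I will prove it directly from Binet's formula (\ref{defn}), using only $\al\be=-1$ and $\al-\be=\sqrt{5}$. The plan has three steps.

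First I multiply out the product. Writing $f_{n\pm1}=(\al^{n\pm1}-\be^{n\pm1})/\sqrt5$, we get
$$5f_{n-1}f_{n+1}=\al^{2n}+\be^{2n}-(\al^{n-1}\be^{n+1}+\al^{n+1}\be^{n-1}).$$

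Next I simplify the cross terms. Factoring out $(\al\be)^{n-1}=(-1)^{n-1}$ leaves $\be^2+\al^2=L_2=3$. Hence
$$5f_{n-1}f_{n+1}=\al^{2n}+\be^{2n}+3(-1)^n.$$
Squaring Binet's formula in the same way gives
$$5f_n^2=\al^{2n}+\be^{2n}-2(\al\be)^n=\al^{2n}+\be^{2n}-2(-1)^n.$$

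Finally I subtract the two expressions. This gives $5(f_{n-1}f_{n+1}-f_n^2)=5(-1)^n$, which is the claim. Since Binet's formula holds for every integer $n$, including negative indices by the extension used in Lemma \ref{lem-negative}, the identity holds for all $n\in\mathbb{Z}$.

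An alternative route is induction on $n$ using the matrix identity $\begin{pmatrix}1&1\\1&0\end{pmatrix}^n=\begin{pmatrix}f_{n+1}&f_n\\ f_n&f_{n-1}\end{pmatrix}$ and taking determinants. However, the Binet computation is shorter and already fits the paper's toolkit.

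There is no real obstacle here. The only point needing care is the sign bookkeeping $(\al\be)^{n-1}=(-1)^{n-1}$ together with the minus sign in front of the cross terms, which turn $-3(-1)^{n-1}$ into $+3(-1)^n$.
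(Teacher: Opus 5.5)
Your proof is correct. The paper gives no proof of this lemma to compare against: it states Cassini's identity as a standard fact and cites \cite{Ko}.

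Your Binet computation is complete and the signs check out. The cross terms are $(\al\be)^{n-1}(\al^2+\be^2)=3(-1)^{n-1}$, so $5f_{n-1}f_{n+1}=\al^{2n}+\be^{2n}+3(-1)^n$. Likewise $5f_n^2=\al^{2n}+\be^{2n}-2(-1)^n$, and the difference is $5(-1)^n$. This matches the paper's own style, since its proof of Lemma \ref{lem-gen} also works directly from Binet's formula and $\al\be=-1$.

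Compared with the matrix-determinant route you mention, the Binet argument covers negative $n$ at no extra cost, because $(\al\be)^{n-1}=(-1)^{n-1}$ holds for every integer $n$. Induction on powers of $\begin{pmatrix}1&1\\1&0\end{pmatrix}$ would need a separate downward induction through the inverse matrix, or an appeal to Lemma \ref{lem-negative}, to reach $n<0$. In exchange, the matrix route never leaves the integers. The paper only uses the identity for $n\geq1$ (in Proposition \ref{prp-Delta} and the estimates after it), so either route suffices.
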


\vskip1pc

\section{\bf{Observation}}

For any $n\in\mathbb{N}$, we consider the infinite reciprocal cubic sum $S_n$ defined by
$$
S_n=\sum^\infty_{k=n}\frac{1}{f_k^3}.
$$
Our goal in this paper is to determine an explicit form of sequence $g_n$ such that 
$$
S_n^{-1}-g_n\rightarrow0\mbox{ as } n\rightarrow\infty.
$$
From the patterns proved in previous results, it is reasonable to expect that 
$g_n$ involves the cubic difference $h_n$ given by
$$
h_n=f_n^3-f_{n-1}^3.
$$

\vskip1pc

Based on the asymptotic behavior of the Fibonacci numbers, it is natural to formulate 
an initial hypothesis that 
\begin{align}\label{hypo-1}
S_n^{-1}-h_n\rightarrow0\mbox{ as } n\rightarrow\infty.
\end{align}
In fact, by applying Binet's formula, $f_k\sim\frac{\al^k}{\sqrt{5}}$ as $k\rightarrow\infty$.
Then we have the approximation of $S_n$ as follows:
\begin{align*}
S_n=\sum^\infty_{k=n}\frac{1}{f_k^3}\sim\sum^\infty_{k=n}\frac{(\sqrt{5})^3}{\al^{3k}}
=\frac{(\sqrt{5})^3\al^{-3n}}{1-\al^{-3}}.
\end{align*}
It follows that
\begin{align*}
S_n^{-1}\sim\frac{\al^{3n}-\al^{3n-3}}{(\sqrt{5})^3}\sim f_n^3-f_{n-1}^3,
\end{align*}
which motivates the hypothesis (\ref{hypo-1}).

\vskip1pc

However, numerical analysis shows that this hypothesis (\ref{hypo-1}) is completely false. 
Instead of converging to zero, 
the difference $S_n^{-1}-h_n$ oscillates continuously and its absolute value grows. 
This divergence is explicitly demonstrated in Table \ref{table-un-wn}, 
which shows the values of $S_n^{-1}-h_n$.

\vskip1pc

\subsection{Oscillating differences}
To understand this phenomenon, 
we compute the values of $S_n^{-1}-h_n$ for the first few terms. 
By observing these values, we can decompose the error $S_n^{-1}-h_n$ into 
a dominant integer sequence $u_n$ and a smaller fractional remainder $w_n$ 
for small values of $n$.\\

\begin{table}[h]
\centering
\begin{tabular}{cccccc}
\toprule
$n$ & Mod 5 & $S_n^{-1}-h_n$ & $u_n$ & $w_n$ \\
\midrule
4 & 4 & $\approx1.909$ & $1$ & $10/11$\\
5 & 0 & $\approx-3.272$ & $-4$ & $8/11$\\
6 & 1 & $\approx5.181$ & $5$ & $2/11$\\
$7$ & 2 & $\approx-8.454$ & $-9$ & $6/11$\\
$8$ & $3$ & $\approx13.636$ & $14$ & $-4/11$\\
$9$ & $4$ & $\approx-22.090$ & $-23$ & $10/11$\\
\bottomrule
\end{tabular}
\caption{Investigation of $S_n^{-1}-h_n$}
\label{table-un-wn}
\end{table}

First, we derive an explicit formula for $u_n$.
The sequence of absolute values $|u_n|$ are $(1,4,5,9,14,23,\ldots)$, which obeys the standard Fibonacci addition rule. 
This fundamental property motivates the formal definition of $u_n$ by the second-order linear recurrence relation:
$$
u_{n+1}=u_{n-1}-u_n\mbox{ for all }n\geq5
$$
with the initial conditions $u_4=1$ and $u_5=-4$.
If we let $x_n=(-1)^nu_n$, then $x_n$ satisfies the standard recurrence 
$$
x_{n+1}=x_{n-1}+x_{n}
$$
with $x_4=1$ and $x_5=4$.
Since $x_n$ arises from the Fibonacci recurrence relation, 
it can be uniquely expressed as a linear combination of $f_{n}$ and $f_{n-1}$.
Thus there exist constants $A$ and $B$ such that $x_n=Af_n+Bf_{n-1}$. 
Using the initial values $x_4=1$ and $x_5=4$, we have $A=5$ and $B=-7$. 
It follows that $x_n=5f_n-7f_{n-1}$, so that 
\begin{align}\label{form-un}
u_n=(-1)^n(5f_n-7f_{n-1}).
\end{align}

Second, the fractional numbers $w_n$ exhibit a strict periodic pattern. 
Precisely, we have
$$
w_n=\frac{8}{11},\frac{2}{11},\frac{6}{11},-\frac{4}{11},\frac{10}{11}~~\mbox{ for }
n\equiv0,1,2,3,4\pmod{5}.
$$
Accordingly, we refine our hypothesis to expect that 
\begin{align}\label{hypo-2}
S_n^{-1}-h_n-u_n-w_n\rightarrow0\mbox{ as } n\rightarrow\infty.
\end{align}

\vskip1pc

\subsection{The Modulo-5 breakdown}

For $n\leq9$, the revised hypothesis (\ref{hypo-2}) holds. 
However, an unexpected structural breakdown occurs when $n=10$.
To investigate this phenomena, we examine the exact interval where 
it occurs by introducing the block index $k=\lfloor\frac{n}{5}\rfloor$.
Thus the range $10\leq n\leq 14$ corresponds to the block $k=2$.\\

\begin{table}[h]
\centering
\begin{tabular}{cccccc}
\toprule
$n$ & Mod 5 & $S_n^{-1}-h_n-u_n-w_n$ & $v_n$ & Fibonacci expression of $v_n$ \\
\midrule
9 & 4 & $\approx0.000$ & $0$ & $0$\\
\midrule
10 & 0 & $\approx-2.000$ & $-2$ & $-2f_1$\\
11 & 1 & $\approx2.000$ & $2$ & $2f_2$ \\
12 & 2 & $\approx-4.000$ & $-4$ & $-2f_3$\\
$13$ & 3 & $\approx6.000$ & $6$ & $2f_4$\\
$14$ & $4$ & $\approx-10.000$ & $-10$ & $-2f_5$\\
\midrule
$15$ & $0$ & $\approx14.000$ & $14$ & $2f_6-2f_1$\\
\bottomrule
\end{tabular}
\caption{Investigation of $S_n^{-1}-h_n-u_n-w_n$ for $k=2$}
\label{table-vn}
\end{table}

{\tiny{
\begin{table}[h]
\centering
\begin{tabular}{c|ccccc}
\toprule
Interval $k$ ($n=5k+i$) & $i=0$ & $i=1$ & $i=2$ & $i=3$ & $i=4$ \\
\midrule
$k=2$ ($10\leq n\leq 14$) & $-2f_1$ & $2f_2$ & $-2f_3$ & $2f_4$ & $-2f_5$\\
\addlinespace
$k=3$ ($15\leq n\leq 19$) 
& $2f_6-2f_1$ 
& $-2f_7+2f_2$ 
& $2f_8-2f_3$
& $-2f_9+2f_4$
& $2f_{10}-2f_5$
\\
\addlinespace
$k=4$ ($20\leq n\leq 24$) 
& $-2f_{11}+2f_6-2f_1$
& $2f_{12}-2f_7+2f_2$
& $-2f_{13}+2f_8-2f_3$
& $2f_{14}-2f_9+2f_4$
& $-2f_{15}+2f_{10}-2f_5$\\
\bottomrule
\end{tabular}
\caption{Module $5$ phenomena of $v_n$ for blocks}
\label{table-vn-2}
\end{table}
}}

\subsection{The Modulo-5 blocks} 
From Table \ref{table-vn-2}, we can formally define the formula of the secondary error term $v_n$ as the finite sum:
\begin{align}\label{form-vn}
v_n=\sum^{\lfloor n/5\rfloor}_{r=2}
(-2)(-1)^{n+r}f_{n-5r+1}.
\end{align}
In particular, we see that $v_n=0$ for $n\leq 9$ and 
$v_n=(-2)(-1)^nf_{n-9}$ for $10\leq n\leq 14$.

\vskip1pc

Taking all correction sequences into account, finally we expect that 
\begin{align}\label{hypo-3}
S_n^{-1}-h_n-u_n-v_n-w_n\rightarrow0\mbox{ as }n\rightarrow\infty.
\end{align}

Now we will prove that 
$$
g_n=h_n+u_n+v_n+w_n
$$
provides the exact algebraic approximation satisfying the problem suggested in Section 1.

\vskip1pc

\begin{prp}\label{prp-vn}
For any $n\geq10$, we have
$$
v_n=\frac{-2(-1)^n}{11}(23f_n-37f_{n-1})-w_n
$$
\end{prp}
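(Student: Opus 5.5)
We write $n=5k+i$ with $k=\lfloor n/5\rfloor\ge 2$ and $0\le i\le 4$. Substituting $s=k-r$, definition (\ref{form-vn}) becomes
$$
v_n=-2(-1)^{n+k}\sum_{s=0}^{k-2}(-1)^{s}f_{5s+i+1}.
$$
The plan is to evaluate this finite sum with Lemma \ref{lem-gen}, taking $a=5$, $b=i+1$, $x=-1$ and upper index $k-2$. The denominator is $1-xL_5+x^2(-1)^5=1+11-1=11$, which explains the factor $1/11$ in the statement. The lemma gives
$$
\sum_{s=0}^{k-2}(-1)^s f_{5s+i+1}=\frac{f_{i+1}-f_{i-4}-(-1)^{k-1}f_{5k+i-4}+(-1)^{k}(-1)f_{5k+i-9}}{11}.
$$
Since $5k+i=n$, the two tail terms are $(-1)^k\bigl(f_{n-4}-f_{n-9}\bigr)$, and these depend only on $n$. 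The head terms $f_{i+1}-f_{i-4}$ depend only on $i$. For $i<4$, Lemma \ref{lem-negative} is needed to handle $f_{i-4}$; for example $f_{-4}=-3$ and $f_{-3}=2$.

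Multiplying by $-2(-1)^{n+k}$, the factors $(-1)^k$ cancel in the tail part. This leaves
$$
\frac{-2(-1)^n}{11}\bigl(f_{n-4}-f_{n-9}\bigr).
$$
Next we reduce $f_{n-4}$ and $f_{n-9}$ to combinations of $f_n$ and $f_{n-1}$ using Lemma \ref{lem-red} with $m=-4$ and $m=-9$, together with Lemma \ref{lem-negative}:
$$
f_{n-4}=f_{-3}f_n+f_{-4}f_{n-1}=2f_n-3f_{n-1},\qquad f_{n-9}=f_{-8}f_n+f_{-9}f_{n-1}=-21f_n+34f_{n-1}.
$$
Their difference is $23f_n-37f_{n-1}$, which matches the main term of the proposition.

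It then remains to show that the head part,
$$
-\frac{2(-1)^{n+k}}{11}\bigl(f_{i+1}-f_{i-4}\bigr),
$$
equals $-w_n$ for each residue $i$. Here $(-1)^{n+k}=(-1)^{6k+i}=(-1)^i$, so this part depends only on $i$, consistent with the periodicity of $w_n$. The values of $f_{i+1}-f_{i-4}$ for $i=0,1,2,3,4$ are
$$
1+3=4,\quad 1-2=-1,\quad 2+1=3,\quad 3-1=2,\quad 5-0=5.
$$
With the sign $-2(-1)^i/11$ these give $-8/11,\,-2/11,\,-6/11,\,4/11,\,-10/11$. These are exactly the negatives of the listed $w_n$ values $8/11,2/11,6/11,-4/11,10/11$.

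The main obstacle is the bookkeeping of signs: the index shift $s=k-r$, the $(-1)^{k+1}$ and $(-1)^{k+2}$ factors in Lemma \ref{lem-gen}, and the negative-index Fibonacci values. I would recheck the final identity numerically at $n=10$. There $v_{10}=-2$, and the formula gives $\frac{-2}{11}(23\cdot 55-37\cdot 34)-\frac{8}{11}=\frac{-2\cdot 7-8}{11}=-2$, as required.
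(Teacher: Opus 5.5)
Your proof is correct and takes essentially the paper's route: evaluate the alternating sum with Lemma \ref{lem-gen} at $x=-1$ (denominator $\pm 11$), reduce the $n$-dependent part to $23f_n-37f_{n-1}$ via Lemma \ref{lem-red}, and check the $i$-dependent part against $w_n$ residue by residue. The only difference is cosmetic. You reverse the summation ($s=k-r$, $a=5$, $b=i+1$) and obtain $f_{n-4}-f_{n-9}$ directly, whereas the paper takes $a=-5$, $b=n+1$, sums from $r=0$, subtracts the $r=0,1$ terms, and arrives at the equivalent expression $f_{n+6}-12f_{n+1}+11f_{n-4}$.
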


\begin{proof}
Let $k=\lfloor n/5\rfloor$. Recall the definition of $v_n$:
\begin{align*}
v_n=(-2)(-1)^n\sum^k_{r=2}(-1)^rf_{n-5r+1}.
\end{align*}
Note that 
\begin{align*}
\sum^k_{r=2}(-1)^rf_{n-5r+1}=
\sum^k_{r=0}(-1)^rf_{n-5r+1}-(f_{n+1}-f_{n-4}).
\end{align*}

By Lemma \ref{lem-gen} with $a=-5$, $b=n+1$, and $x=-1$, we have
\begin{align*}
\sum^k_{r=0}(-1)^rf_{n-5r+1}
&=\frac{f_{n+1}-f_{n+6}+(-1)^kf_{n-5k-4}+(-1)^{k+1}f_{n-5k+1}}{1+L_{-5}+(-1)}\\
&=\frac{f_{n+1}-f_{n+6}+(-1)^kf_{n-5k-4}+(-1)^{k+1}f_{n-5k+1}}{-11}.
\end{align*}

Combining these evaluations gives the closed form
\begin{align}\label{form-sum}
\sum^k_{r=2}(-1)^rf_{n-5r+1}=
\frac{f_{n+6}-f_{n+1}}{11}
-(f_{n+1}-f_{n-4})
+(-1)^k\frac{f_{n-5k+1}-f_{n-5k-4}}{11}.
\end{align}

By expressing the general index as $n=5k+i$ for some $0\leq i\leq 4$, where $k=\lfloor n/5\rfloor$, 
the sum (\ref{form-sum}) simplifies
\begin{align*}
\sum^{\lfloor n/5\rfloor}_{r=2}(-1)^rf_{n-5r+1}=
\frac{f_{n+6}-12f_{n+1}+11f_{n-4}}{11}
+(-1)^k\frac{f_{i+1}-f_{i-4}}{11}.
\end{align*}
By Lemma \ref{lem-red}, we can reduce the numerator of the term into a linear combination of $f_n$ and $f_{n-1}$ as follows:
\begin{align*}
f_{n+6}-12f_{n+1}+11f_{n-4}
&=(13f_n+8f_{n-1})-12(f_n+f_{n-1})+11(2f_n-3f_{n-1})\\
&=23f_n-37f_{n-1}.
\end{align*}
Substituting these reduced formulas into the definition for $v_n$, we have
\begin{align*}
v_n
&=(-2)(-1)^n\left[
\frac{23f_n-37f_{n-1}}{11}
+(-1)^k\frac{f_{i+1}-f_{i-4}}{11}
\right]\\
&=\frac{(-2)(-1)^n}{11}(23f_n-37f_{n-1})
-\frac{2(-1)^{n+k}}{11}(f_{i+1}-f_{i-4}).
\end{align*}
Since $n+k=6k+i$, we have $(-1)^{n+k}=(-1)^i$. We now claim that the two remainder terms are exactly same as $w_n$:
\begin{align*}
w_n=w_i=\frac{2(-1)^i}{11}(f_{i+1}-f_{i-4}).
\end{align*}

To verify this claim, we evaluate this formula for each remainder $0\leq i\leq4$ as follows:
\begin{align*}
&w_0=\frac{2}{11}(f_1-f_{-4})=\frac{2}{11}(1-(-3))=\frac{8}{11},\\
&w_1=\frac{-2}{11}(f_2-f_{-3})=\frac{-2}{11}(1-2)=\frac{2}{11},\\
&w_2=\frac{2}{11}(f_3-f_{-2})=\frac{2}{11}(2-(-1))=\frac{6}{11},\\
&w_3=\frac{-2}{11}(f_4-f_{-1})=\frac{-2}{11}(3-1)=\frac{-4}{11},\\
&w_4=\frac{2}{11}(f_5-f_{0})=\frac{2}{11}(5-0)=\frac{10}{11}.
\end{align*}
Hence we conclude that
\begin{align*}
v_n=\frac{(-2)(-1)^n}{11}(23f_n-37f_{n-1})-w_n.
\end{align*}

\end{proof}

\begin{prp}
For any $n\geq10$, the unified error term $E_n$ is simplified to
$$
E_n=\frac{3(-1)^n}{11}(3f_n-f_{n-1}).
$$
\end{prp}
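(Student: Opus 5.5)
The unified error term is presumably $E_n=u_n+v_n+w_n$, so that $g_n=h_n+E_n$. The plan is to substitute the closed forms already obtained and collect coefficients of $f_n$ and $f_{n-1}$. No inequalities or case analysis should be needed beyond what is contained in Proposition \ref{prp-vn}.

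First, I would recall from (\ref{form-un}) that
$$u_n=(-1)^n(5f_n-7f_{n-1}).$$
Second, for $n\geq10$, Proposition \ref{prp-vn} gives
$$v_n+w_n=\frac{-2(-1)^n}{11}(23f_n-37f_{n-1}).$$
The point of that proposition is exactly that the periodic fractional part $w_n$, which depends on $n \bmod 5$, cancels against the boundary term of the finite sum defining $v_n$. The sum $v_n+w_n$ is therefore a single linear combination of $f_n$ and $f_{n-1}$ with a uniform sign $(-1)^n$, with no dependence on the residue class.

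Adding these two expressions gives
$$E_n=\frac{(-1)^n}{11}\Bigl(55f_n-77f_{n-1}-46f_n+74f_{n-1}\Bigr)=\frac{(-1)^n}{11}(9f_n-3f_{n-1}),$$
which is $\frac{3(-1)^n}{11}(3f_n-f_{n-1})$, as claimed.

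The only point requiring care is the restriction $n\geq10$. It is inherited from Proposition \ref{prp-vn}, where the sum over $r$ from $2$ to $\lfloor n/5\rfloor$ must be nonempty for the generating-function evaluation of Lemma \ref{lem-gen} to apply in the stated form. I expect no real obstacle beyond checking the sign conventions in $v_n$ and the arithmetic of the combination. As a sanity check, I would verify the formula numerically at $n=10$ against Tables \ref{table-un-wn} and \ref{table-vn}:
\begin{itemize}
\item $u_{10}=5\cdot55-7\cdot34=37$,
\item $v_{10}=-2$,
\item $w_{10}=8/11$,
\item so $u_{10}+v_{10}+w_{10}=35+8/11=393/11$,
\item while $\frac{3}{11}(165-34)=393/11$.
\end{itemize}
The two sides agree.
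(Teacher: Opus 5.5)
Your argument is correct and is exactly the paper's proof: add $u_n=(-1)^n(5f_n-7f_{n-1})$ from (\ref{form-un}) to $v_n+w_n=\frac{-2(-1)^n}{11}(23f_n-37f_{n-1})$ from Proposition~\ref{prp-vn}, so that $w_n$ cancels and the coefficients combine to $\frac{(-1)^n}{11}(9f_n-3f_{n-1})$. One small correction to your side remark: nonemptiness of the sum is not what forces $n\ge10$, because the splitting step in Proposition~\ref{prp-vn} remains valid when $\lfloor n/5\rfloor=1$ (both sides are then $0$), and the formula for $E_n$ in fact also holds for $4\le n\le 9$ (e.g.\ $E_8=14-\frac{4}{11}=\frac{150}{11}=\frac{3}{11}(63-13)$), though this does not affect your argument.
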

\begin{proof}
By the formula (\ref{form-un}) and Proposition \ref{prp-vn}, the fractional term $w_n$ is canceled, so that
\begin{align*}
E_n
&=u_n+(v_n+w_n)\\
&=(-1)^n(5f_n-7f_{n-1})+\frac{-2(-1)^n}{11}(23f_n-37f_{n-1})\\
&=\frac{3(-1)^n}{11}(3f_n-f_{n-1}).
\end{align*}
\end{proof}

\vskip 1pc

\section{\bf{Main results}}

We now present our main theorems for the reciprocal cubic sum $S_n$.

\begin{thm}\label{thm-main}
For all sufficiently large $n$, the inverse reciprocal sum $S_n^{-1}$ satisfies 
\begin{align*}
g_n<S_n^{-1}<g_n+c_n,
\end{align*}
where $c_n=2/f_n$. Thus we have
\begin{align*}
\lim_{n\rightarrow\infty}
\left\{
\left(
\sum^\infty_{k=n}\frac{1}{f_k^3}
\right)^{-1}-g_n
\right\}=0,
\end{align*}
so that $g_{3,n}=g_n$.
\end{thm}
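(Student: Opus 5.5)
Both inequalities follow from one telescoping argument. Since $S_n=\sum_{k\ge n}\big(S_k-S_{k+1}\big)$ and $S_k-S_{k+1}=f_k^{-3}$, it suffices to find a sequence $a_n\to\infty$ with
$$
\frac{1}{a_k}-\frac{1}{a_{k+1}}<\frac{1}{f_k^3}\quad\mbox{for all large }k .
$$
Summing this from $k=n$ to $\infty$ gives $1/a_n<S_n$, that is, $S_n^{-1}<a_n$. The reverse inequality for $k\ge n$ gives $S_n^{-1}>a_n$. I will take $a_n=g_n$ for the lower bound and $a_n=g_n+2/f_n$ for the upper bound.

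For the lower bound, the key step is to show $f_k^3(g_{k+1}-g_k)>g_kg_{k+1}$ for large $k$. Equivalently,
$$
D_k:=f_k^3(g_{k+1}-g_k)-g_kg_{k+1}>0 .
$$
Both terms have degree $6$ in $(f_k,f_{k-1})$, so I expect the leading terms to cancel. To see this, write $f_{k+1}=f_k+f_{k-1}$ and reduce all higher indices by Lemma \ref{lem-red}, expressing $g_k$ and $g_{k+1}$ as cubic polynomials in $x=f_k$ and $y=f_{k-1}$ plus the linear terms $\pm\frac{3}{11}(\cdots)$. Then repeatedly replace $x^2-xy-y^2$ by $(-1)^{k-1}$ using Cassini's identity (Lemma \ref{lem-Cassini}), since $f_{k-1}f_{k+1}=f_k^2+(-1)^k$ gives $x^2-xy-y^2=-(-1)^k$.

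The coefficient $\frac{3}{11}(3f_n-f_{n-1})$ was chosen exactly so that the degree-$6$ and degree-$4$ parts of $D_k$ vanish. I expect this from the heuristic $S_n^{-1}-g_n\to0$, which was derived from the numerics and from $v_n$. After reduction, $D_k$ should collapse to a polynomial of degree at most $2$ in $(x,y)$, possibly with a sign factor $(-1)^k$. The remaining degree-$2$ part should be a definite positive quadratic form, plausibly a positive constant times $x^2$ plus lower-order terms. Since $y/x\to1/\al$, it is enough to evaluate the leading quadratic form at $(1,1/\al)$ and check that it is positive, which gives $D_k>0$ for large $k$.

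For the upper bound, set $a_k=g_k+c_k$ with $c_k=2/f_k$ and show
$$
f_k^3(a_{k+1}-a_k)<a_ka_{k+1} .
$$
Multiply through by $f_kf_{k+1}$ to clear denominators, which turns this into a polynomial inequality. Relative to $D_k$, the new contribution at leading order is
$$
g_k c_{k+1}+g_{k+1}c_k-f_k^3(c_{k+1}-c_k)\approx 2\Big(\frac{h_k}{f_{k+1}}+\frac{h_{k+1}}{f_k}+\frac{f_k^3}{f_k}-\frac{f_k^3}{f_{k+1}}\Big),
$$
which is of order $f_k^2$ with a positive coefficient. The check is that this coefficient, computed at the ratio $\al$, beats the magnitude of the degree-$2$ remainder of $D_k$. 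This is where the constant $2$ matters, and a cruder constant such as $1$ might fail, so I will compute the numeric constants with $x=\al y$.

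The main obstacle is the bookkeeping in the first step: the exact cancellation of the degree-$6$ and degree-$4$ parts, and identifying the sign of the surviving quadratic part. If the leading form turns out to be indefinite, so that its sign alternates with $(-1)^k$, I will handle even and odd $k$ separately. In that case I will pair consecutive terms in the telescoping sum instead, and compare $S_k-S_{k+2}$ with $1/a_k-1/a_{k+2}$. Finally, the limit statement follows immediately because $2/f_n\to0$.
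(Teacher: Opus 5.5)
Your proposal follows the paper's proof. Both use the same telescoping comparison with $a_n=g_n$ and $a_n=g_n+2/f_n$, and the same Cassini reduction of $D_k$ (the paper's $\Delta_n$, which it splits as $X_n+Y_n+Z_n$). The computation you leave as an expectation comes out exactly as you predict: $\Delta_n=\frac{60}{121}f_n^2+\frac{156}{121}f_nf_{n-1}+\frac{28(-1)^n}{121}>0$, with no alternating quadratic part, so the pairing fallback is never needed.

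For the upper bound, the paper just shows $T_n=c_ng_{n+1}+c_{n+1}g_n+c_nc_{n+1}>3f_n^2-6$ and compares this with the exact formula for $\Delta_n$. The constant $2$ is not delicate: your own leading-order comparison, about $8.2f_n^2$ against $1.3f_n^2$, shows that any $c_n=c/f_n$ with $c$ above roughly $0.32$ would work, so $c=1$ would be fine as well.
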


\begin{thm}\label{thm-floor} 
For all $n\geq10$, the exact value of the floor function of $S_n^{-1}$ is given by
$$
\lfloor
S_n^{-1}
\rfloor
=
\begin{cases}
g_n-w_n & n\not\equiv3\pmod{5},\\
g_n-w_n-1 & n\equiv3\pmod{5}.
\end{cases}
$$
\end{thm}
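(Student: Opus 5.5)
The plan is to read off the floor directly from the two-sided estimate of Theorem \ref{thm-main}, after isolating the integer part of $g_n$. First I will record that for $n\geq 10$ the closed form $g_n=h_n+\frac{3(-1)^n}{11}(3f_n-f_{n-1})$ coincides with $h_n+u_n+v_n+w_n$. This follows from the proposition giving $E_n=u_n+v_n+w_n=\frac{3(-1)^n}{11}(3f_n-f_{n-1})$, which in turn uses Proposition \ref{prp-vn}. Hence
$$
g_n-w_n=h_n+u_n+v_n .
$$
Here $h_n=f_n^3-f_{n-1}^3$ is an integer. So is $u_n=(-1)^n(5f_n-7f_{n-1})$ by (\ref{form-un}), and so is $v_n$, being the finite integer sum (\ref{form-vn}). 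Thus $N_n:=g_n-w_n\in\mathbb{Z}$.

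Next I will write $S_n^{-1}=N_n+w_n+\delta_n$, where $0<\delta_n<2/f_n$ by Theorem \ref{thm-main}. The floor is then $N_n+\lfloor w_n+\delta_n\rfloor$, and I split according to $n\bmod 5$ using the explicit values of $w_n$.

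\emph{Case $n\not\equiv 3\pmod 5$.} Here $w_n\in\{8/11,2/11,6/11,10/11\}$, so $0<w_n+\delta_n<10/11+2/f_n$. This is less than $1$ as soon as $f_n>22$, i.e.\ for $n\geq 8$. Hence $\lfloor S_n^{-1}\rfloor=N_n=g_n-w_n$.

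\emph{Case $n\equiv 3\pmod 5$.} Here $w_n=-4/11$, so $-4/11<w_n+\delta_n<-4/11+2/f_n$. The upper bound is less than $0$ once $f_n>11/2$, so $w_n+\delta_n\in(-1,0)$. Hence $\lfloor S_n^{-1}\rfloor=N_n-1=g_n-w_n-1$.

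The main obstacle is the range of validity. Theorem \ref{thm-main} is stated only for ``sufficiently large'' $n$, whereas the present theorem claims the formula for every $n\geq 10$. My plan is to trace through the proof of Theorem \ref{thm-main} to extract an explicit threshold $n_0$ beyond which both inequalities hold. The error terms there are geometric in $\alpha^{-n}$, so this should be a concrete, modest value.

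For the remaining finitely many $n$ with $10\leq n<n_0$, I will verify the formula directly. One route is a numerical check of $S_n^{-1}$, with the tail beyond a large cutoff bounded by a geometric series. Note that the argument above does not need the full strength $\delta_n<2/f_n$. In the case $n\not\equiv3\pmod 5$ it only needs $\delta_n<1/11$, and in the case $n\equiv 3\pmod 5$ only $\delta_n<4/11$. So a weaker but explicit version of the upper estimate, valid already from $n=10$, would suffice and avoid the finite check. I would try that first.
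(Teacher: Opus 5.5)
Your argument is correct and matches the paper's proof: $g_n-w_n=h_n+u_n+v_n\in\mathbb{Z}$, combined with $g_n<S_n^{-1}<g_n+2/f_n$ and the sign and size of $w_n$ in each residue class mod $5$. Your worry about the range is justified, since the paper passes from ``sufficiently large'' to $n\geq 10$ without comment; however, the proof of Theorem \ref{thm-main} already gives the upper bound for every $n\geq 5$ (it uses only $f_{n-1}>f_n/2$ and $f_n^2>7$, and the lower bound holds for all $n\geq 1$), so no finite check is needed. One small slip, harmless here: $f_n>22$ means $n\geq 9$, not $n\geq 8$, because $f_8=21$.
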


\begin{proof}
By Theorem \ref{thm-main}, we have the inequality 
$$g_n<S_n^{-1}<g_n+\frac{2}{f_n}$$ for sufficiently large $n$. 
Recall that $g_n=h_n+u_n+v_n+w_n$. 
Since $h_n$, $u_n$, and $v_n$ are integers, we see that $g_n-w_n=h_n+u_n+v_n$ is an integer. Note that
\begin{align*}
(g_n-w_n)+w_n<S_n^{-1}<(g_n-w_n)+w_n+\frac{2}{f_n}.
\end{align*}
Recall that $w_n\in\{\frac{8}{11},\frac{2}{11},\frac{6}{11},\frac{-4}{11},\frac{10}{11}\}$
and for sufficiently large $n$ the term $\frac{2}{f_n}$ is less than $\frac{1}{11}$.

If $n\not\equiv3\pmod{5}$, then $w_n$ takes a positive value from the set 
$\{\frac{8}{11},\frac{2}{11},\frac{6}{11},\frac{10}{11}\}$. Then
\begin{align*}
g_n-w_n<(g_n-w_n)+w_n<S_n^{-1}<(g_n-w_n)+w_n+\frac{2}{f_n}<(g_n-w_n)+1,
\end{align*}
which proves $\lfloor S_n^{-1}\rfloor=g_n-w_n$.
\vskip1pc
If $n\equiv3\pmod{5}$, then $w_n=\frac{-4}{11}$. It follows that 
\begin{align*}
(g_n-w_n)-1<S_n^{-1}<g_n-w_n,
\end{align*}
which proves $\lfloor S_n^{-1}\rfloor=g_n-w_n-1$.
\end{proof}

\vskip1pc

\section{\bf{Proof of the Main Theorem}}

To prove the inequalities of the Main Theorem, we first introduce and prove a fundamental algebraic inequality.

\begin{prp}\label{prp-Delta}
For all integers $n\geq1$, let 
$\Delta_n:=(g_{n+1}-g_n)f_n^3-g_ng_{n+1}$. Then
$$\Delta_n>0.$$
\end{prp}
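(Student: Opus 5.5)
The plan is a direct algebraic computation that reduces $\Delta_n$ to an explicit low-degree expression in $f_n$ and $f_{n-1}$. Set $a=f_n$, $b=f_{n-1}$ and $\varepsilon=(-1)^n$, so that $f_{n+1}=a+b$. Then
\begin{align*}
g_n&=a^3-b^3+\tfrac{3\varepsilon}{11}(3a-b),\\
g_{n+1}&=(a+b)^3-a^3-\tfrac{3\varepsilon}{11}(2a+3b).
\end{align*}
I will split $g_n=h_n+e_n$, where $e_n=\frac{3\varepsilon}{11}(3a-b)$ is the linear correction, and expand $\Delta_n$ by degree.

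First I treat the purely cubic part. Expanding $(h_{n+1}-h_n)a^3-h_nh_{n+1}$, the mixed terms cancel and leave $b^3(a+b)^3-a^6$. By Cassini's identity (Lemma \ref{lem-Cassini}) in the form $f_{n-1}f_{n+1}=f_n^2+(-1)^n$, this equals
$$(a^2+\varepsilon)^3-a^6=3\varepsilon a^4+3a^2+\varepsilon .$$
So the leading part of $\Delta_n$ is not positive by itself. It has the oscillating size $3\varepsilon a^4$, and this must be cancelled by the cross terms coming from $e_n$.

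Next I collect the terms that are linear in the corrections:
$$(e_{n+1}-e_n)a^3-h_ne_{n+1}-e_nh_{n+1}.$$
These are $\varepsilon$ times quartic polynomials in $a,b$. I then add the quadratic term $-e_ne_{n+1}=\frac{9}{121}(3a-b)(2a+3b)$, which is positive-ish and of degree $2$. To compare everything, I reduce each polynomial modulo the Cassini relation, written as $b^2=a^2-ab+\varepsilon$. This makes every expression at most linear in $b$, and the result has the form $P(a)+bQ(a)$ with explicit rational coefficients.

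The key check, and the main obstacle, is to verify that after this reduction all $\varepsilon a^4$ and $\varepsilon a^3b$ contributions cancel exactly against the $3\varepsilon a^4$ above. This is presumably exactly why the constant $\frac{3}{11}$ and the combination $3f_n-f_{n-1}$ were chosen. I expect the surviving part to be a quadratic form in $(a,b)$ plus lower-order $\varepsilon$-terms. Its sign will be positive provided the quadratic form is positive definite on the region $a/2\le b\le a$ (for $n\ge 2$), so I will check its values or discriminant using $b/a\to\alpha^{-1}$. If some sign-indefinite remainder of degree $\le 1$ survives, I will bound it by the quadratic part for $n\ge n_0$. The finitely many $n<n_0$, starting from $n=1$, I will verify by direct numerical evaluation of $\Delta_n$.
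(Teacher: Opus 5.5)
Your plan is the paper's proof. It uses the same split of $\Delta_n$ into the principal block $3(-1)^nf_n^4+3f_n^2+(-1)^n$, the cross block (which simplifies to $e_{n+1}f_{n-1}^3-e_nf_{n+1}^3$), and $-e_ne_{n+1}$, followed by reduction via $f_{n-1}^2=f_n^2-f_nf_{n-1}+(-1)^n$. Carried out, the cross block becomes $-3(-1)^nf_n^4-\frac{30}{11}f_n^2+\frac{6}{11}f_nf_{n-1}-\frac{6(-1)^n}{11}$, with the $f_n^3f_{n-1}$ coefficient vanishing as you anticipated. Hence $\Delta_n=\frac{60}{121}f_n^2+\frac{156}{121}f_nf_{n-1}+\frac{28(-1)^n}{121}$, which is positive for every $n\ge1$ immediately, so the limit $b/a\to\alpha^{-1}$, the threshold $n_0$ and the numerical checks are unnecessary.
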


\begin{proof}[Proof of Proposition \ref{prp-Delta}]
Recall that $g_n$ is defined by the sum $g_n=h_n+E_n$, where the principal part is $h_n=f_n^3-f_{n-1}^3$ and the unified error is
$$
E_n=\frac{3(-1)^n}{11}(3f_n-f_{n-1}).
$$

Substituting $g_n=h_n+E_n$ into the algebraic definition of $\Delta_n$, we can 
naturally partition $\Delta_n$ into three blocks:
\begin{align*}
\Delta_n
&=
\left[
(h_{n+1}-h_n)f_n^3-h_nh_{n+1}
\right]
+\left[
(E_{n+1}-E_n)f_n^3-h_nE_{n+1}-h_{n+1}E_n
\right]
-E_nE_{n+1}\\
&=:X_n+Y_n+Z_n.
\end{align*}

We now evaluate each of these algebraic blocks explicitly. 
For the principal block $X_n$, we apply Lemma \ref{lem-Cassini} to obtain
\begin{align*}
X_n
&=(h_{n+1}-h_n)f_n^3-h_nh_{n+1}\\
&=(f_{n+1}^3-2f_n^3+f_{n-1}^3)f_n^3
-(f_n^3-f_{n-1}^3)(f_{n+1}^3-f_n^3)\\
&=-f_n^6+(f_{n-1}f_{n+1})^3\\
&=-f_n^6+(f_n^2+(-1)^n)^3\\
&=3(-1)^nf_n^4+3f_n^2+(-1)^n.
\end{align*}

Next, the cross-term block $Y_n$ is simplified to
\begin{align*}
Y_n&=(E_{n+1}-E_n)f_n^3-h_nE_{n+1}-h_{n+1}E_n\\
&=(E_{n+1}-E_n)f_n^3-(f_n^3-f_{n-1}^3)E_{n+1}-(f_{n+1}^3-f_n^3)E_{n}\\
&=E_{n+1}f_{n-1}^3-E_nf_{n+1}^3.
\end{align*}

Substituting the definition of $E_n$ and expressing $f_{n+1}=f_n+f_{n-1}$, we obtain
\begin{align*}
Y_n
&=E_{n+1}f_{n-1}^3-E_nf_{n+1}^3\\
&=\frac{3(-1)^{n+1}}{11}(3f_{n+1}-f_n)f_{n-1}^3-\frac{3(-1)^n}{11}(3f_n-f_{n-1})f_{n+1}^3\\
&=\frac{3(-1)^{n+1}}{11}(3f_{n+1}f_{n-1}^3-f_nf_{n-1}^3+3f_nf_{n+1}^3-f_{n-1}f_{n+1}^3)\\
&=\frac{3(-1)^{n+1}}{11}(3f_nf_{n-1}^3+3f_{n-1}^4-f_nf_{n-1}^3\\
&+3f_n^4+9f_n^3f_{n-1}+9f_n^2f_{n-1}^2+3f_nf_{n-1}^3
-f_{n-1}^4-3f_{n-1}^3f_n-3f_{n-1}^2f_n^2-f_{n-1}f_n^3)\\
&=\frac{3(-1)^{n+1}}{11}(3f_n^4+8f_n^3f_{n-1}+6f_n^2f_{n-1}^2+2f_nf_{n-1}^3+2f_{n-1}^4)
\end{align*}

To eliminate the higher powers of $f_{n-1}$, by Lemma \ref{lem-Cassini}, we have
\begin{align}\label{form-ele}
f_{n-1}^2=f_{n-1}(f_{n+1}-f_n)=f_n^2+(-1)^n-f_{n-1}f_n.
\end{align}

By substituting the formula (\ref{form-ele}) into the polynomial expansion, we have 
\begin{align*}
&3f_n^4+8f_n^3f_{n-1}+6f_n^2f_{n-1}^2+2f_nf_{n-1}^3+2f_{n-1}^4\\
&=3f_n^4+8f_n^3f_{n-1}+6f_n^2(f_n^2+(-1)^n-f_{n-1}f_n)\\
&+2f_nf_{n-1}(f_n^2+(-1)^n-f_{n-1}f_n)\\
&+2(f_n^4+1+f_{n-1}^2f_n^2+2(-1)^nf_n^2-2f_{n-1}f_n^3-2(-1)^nf_{n-1}f_n)\\
&=11f_n^4+10(-1)^nf_n^2-2(-1)^nf_nf_{n-1}+2.
\end{align*}

It follows that
\begin{align*}
Y_n=-3(-1)^nf_n^4-\frac{30}{11}f_n^2+\frac{6}{11}f_nf_{n-1}-\frac{6(-1)^n}{11}.
\end{align*}

Finally, we evaluate the block $Z_n$.
By Lemma \ref{lem-Cassini}, we have
\begin{align*}
Z_n
&=-E_nE_{n+1}\\
&=-\frac{3(-1)^n}{11}(3f_n-f_{n-1})\cdot\frac{3(-1)^{n+1}}{11}(3f_{n+1}-f_{n})\\
&=\frac{9}{121}(3f_n-f_{n-1})(2f_n+3f_{n-1})\\
&=\frac{9}{121}(6f_n^2+7f_nf_{n-1}-3f_{n-1}^2)\\
&=\frac{9}{121}(3f_n^2+10f_nf_{n-1}-3(-1)^n).
\end{align*}

If we sum up the exact expressions for $X_n$, $Y_n$, and $Z_n$, then the leading term 
$3(-1)^nf_n^4$ is removed as follows:
\begin{align*}
\Delta_n
&=X_n+Y_n+Z_n\\
&=3(-1)^nf_n^4+3f_n^2+(-1)^n\\
&-3(-1)^nf_n^4-\frac{30}{11}f_n^2+\frac{6}{11}f_nf_{n-1}-\frac{6(-1)^n}{11}\\
&+\frac{9}{121}(3f_n^2+10f_nf_{n-1}-3(-1)^n)\\
&=\frac{60}{121}f_n^2+\frac{156}{121}f_nf_{n-1}+\frac{28(-1)^n}{121}.
\end{align*}
Then $\Delta_n\geq\frac{60}{121}-\frac{28}{121}>0$ for all $n\geq1$.
It completes the proof.
\end{proof}

\subsection{\bf{Lower bound}}\label{subsec-lower}

By Proposition \ref{prp-Delta}, we have
\begin{align}\label{ineq-1}
\frac{1}{g_n}-\frac{1}{g_{n+1}}-\frac{1}{f_n^3}
=\frac{\Delta_n}{g_ng_{n+1}f_n^3}>0
\end{align}
for all positive integers $n$, where $\Delta_n$ is defined as in Proposition \ref{prp-Delta}. By the inequality (\ref{ineq-1}), we have
\begin{align*}
S_n=\sum^\infty_{k=n}\frac{1}{f_k^3}<\sum^\infty_{k=n}
\left(
\frac{1}{g_k}-\frac{1}{g_{k+1}}
\right)=\frac{1}{g_n}.
\end{align*}
It follows that $g_n<S_n^{-1}$ for all positive integers $n$.

\vskip1pc

\subsection{\bf{Upper bound}}

To establish the upper bound, we will show that
\begin{align*}
\frac{1}{g_n+c_n}-\frac{1}{g_{n+1}+c_{n+1}}-\frac{1}{f_n^3}<0.
\end{align*}

Consider $c_n=2/f_n$. 
If we multiply the denominators to define the modified numerator $\Delta_n^*$ by
\begin{align*}
\Delta_n^*
&:=\left[(g_{n+1}-g_n)+(c_{n+1}-c_n)\right]f_n^3-
(g_n+c_n)(g_{n+1}+c_{n+1}).
\end{align*}

Recall that in subsection \ref{subsec-lower},
we computed the formula of 
$\Delta_n=(g_{n+1}-g_n)f_n^3-g_ng_{n+1}$. 
Surprisingly, $\Delta_n^*$ can be decomposed into 
$\Delta_n$ and the remainder terms.
Precisely, we have
\begin{align*}
\Delta_n^*
=\Delta_n+(c_{n+1}-c_n)f_n^3-T_n,
\end{align*}
where  the strictly positive term $T_n$ is defined by
\begin{align*}
T_n=c_ng_{n+1}+c_{n+1}g_n+c_nc_{n+1}.
\end{align*}

By substituting the formula $g_n=h_n+E_n$, we rewrite $T_n$ as
\begin{align*}
T_n=c_n(h_{n+1}+E_{n+1})+c_{n+1}(h_n+E_n)+c_nc_{n+1}.
\end{align*}
Since $c_{n+1}h_n>0$ and $c_nc_{n+1}>0$, 
we have
\begin{align}\label{ineq-Tn}
T_n>\frac{2}{f_n}(f_{n+1}^3-f_n^3)-\frac{2|E_{n+1}|}{f_n}-\frac{2|E_n|}{f_{n+1}}.
\end{align}

Since the inequality $f_{n-1}>\frac{1}{2}f_n$ holds for any $n\geq5$, we have 
\begin{align*}
f_{n+1}^3-f_n^3=3f_n^2f_{n-1}+3f_nf_{n-1}^2+f_{n-1}^3>3f_n^2f_{n-1}>\frac{3}{2}f_n^3.
\end{align*}
Simultaneously, we estimate 
\begin{align*}
|E_n|=\frac{3}{11}(3f_n-f_{n-1})<\frac{3}{11}\left(3f_n-\frac{1}{2}f_n\right)<f_n<f_{n+1}
\end{align*}
and applying the similar argument to obtain
\begin{align*}
|E_{n+1}|<f_{n+1}<2f_n.
\end{align*}
Substituting the above estimates into the inequality (\ref{ineq-Tn}), we have
\begin{align*}
T_n>\frac{2}{f_n}\cdot\frac{3}{2}f_n^3-2\cdot2-2=3f_n^2-6.
\end{align*}
Since the sequence $c_n$ is strictly decreasing, the term 
$(c_{n+1}-c_n)f_n^3$ is strictly negative.
Consequently, 
\begin{align*}
\Delta_n^*<\Delta_n-T_n&<
\frac{60}{121}f_n^2+\frac{156}{121}f_nf_{n-1}+\frac{28}{121}-3f_n^2+6<-f_n^2+7<0
\end{align*}
for all sufficiently large $n$. 
It follows that $\Delta_n^*<0$.

Since the denominator $(g_n+c_n)(g_{n+1}+c_{n+1})f_n^3$ 
is strictly positive,  the inequality $\Delta_n^*<0$ implies that 
\begin{align*}
\frac{1}{g_n+c_n}-\frac{1}{g_{n+1}+c_{n+1}}-\frac{1}{f_n^3}
=\frac{\Delta_n^*}{(g_n+c_n)(g_{n+1}+c_{n+1})f_n^3}<0.
\end{align*}
By the standard telescoping method again, we have 
\begin{align*}
S_n=\sum^\infty_{k=n}\frac{1}{f_k^3}>
\sum^\infty_{k=n}\left(
\frac{1}{g_k+c_k}-\frac{1}{g_{k+1}+c_{k+1}}
\right)=\frac{1}{g_n+c_n}.
\end{align*}
By taking the reciprocal of both sides, we obtain 
$S_n^{-1}<g_n+c_n$ for sufficiently large $n$. 
It completes the proof of Theorem \ref{thm-main}.

\vskip1pc

\section{\bf{Concluding remarks}}

In this paper, we determined the continuous approximation and the exact integer part of the inverse reciprocal sum of the cubes of Fibonacci numbers. 
Unlike the previous cases of $s=1,2$ and $4$, 
the case $s=3$ contains the modulo-5 periodic fractional remainder $w_n$. 

The method in this paper can be generalized to higher powers $s\geq5$.
However, discovering the explicit formula of $g_{s,n}$ will be a significantly more challenging problem, 
as it requires more complicated oscillating terms similar to the process explained in Section 3.
Moreover, our results and methods can be extended to 
generalized Fibonacci numbers \cite{Ramirez14} and other second-order linear recurrence sequences.

\end{document}